\newcommand{\R}{\mathbb{R}}
\renewcommand{\L}{\mathrm{L}}
\newtheorem{theorem}{Theorem}
\newtheorem{lemma}[theorem]{Lemma}
\DeclareFontFamily{U}{mathx}{\hyphenchar\font45}
\DeclareFontShape{U}{mathx}{m}{n}{
<5> <6> <7> <8> <9> <10>
<10.95> <12> <14.4> <17.28> <20.74> <24.88>
mathx10
}{}
\DeclareSymbolFont{mathx}{U}{mathx}{m}{n}
\DeclareMathAccent{\widecheck}{0}{mathx}{"71}
\numberwithin{equation}{section}
\begin{document}
\title{Singular Brascamp-Lieb inequalities with cubical structure}

\author[P. Durcik]{Polona Durcik}
\address{Polona Durcik, California Institute of Technology, 1200 E California Blvd, Pasadena CA 91125, USA}
\email{durcik@caltech.edu}

\author[C. Thiele]{Christoph Thiele}
\address{Christoph Thiele, Mathematisches Institut, Universit\"at Bonn, Endenicher Allee 60, 53115 Bonn, Germany}
 \email{thiele@math.uni-bonn.de}

\date{\today}

\begin{abstract}
We prove  a singular Brascamp-Lieb inequality, stated in Theorem \ref{mainthm}, with a large group of involutive symmetries. 
\end{abstract}

\maketitle

\section{Introduction}
 
Much research has been devoted in recent years to  Brascamp-Lieb and related inequalities, we refer to \cite{BL}, \cite{BCCT}, 
\cite{BBBF}, \cite{BBCF} and the references therein.
Brascamp-Lieb inequalities are $\L^p$ estimates for certain multilinear forms on functions on 
Euclidean spaces. The forms consist of integrating the tensor product of the input functions over a subspace 
of the direct sum of the domain spaces.  
Following general conventions, we parameterize the subspace of integration by $\R^m$ and write the corresponding Brascamp-Lieb inequality 

 \begin{equation}\label{brasliebineq}
  \Big | \int_{\R^m} \Big ( \prod_{i=1}^n F_i(\Pi_i x)\Big )\, dx \Big | 
 \leq C \prod_{i=1}^n \|F_i\|_{p_i} 
  \end{equation}
with suitable surjective linear maps
$$\Pi_i:\R^m\to \R^{k_i} .$$
Here the constant $C$ is independent of the measurable functions
$F_i$ on $\R^{k_i}$, and integrability on the left-hand side being implied by finiteness of the right-hand side.

It is well understood, under which conditions the Brascamp-Lieb inequality holds.
Bennett, Carbery, Christ, and Tao \cite{BCCT} prove a necessary and sufficient dimensional condition, namely that
  \begin{equation}\label{bcct}
  \dim(V)\le \sum_{i=1}^n \frac 1{p_i} \dim(\Pi_i V)
  \end{equation}
for every subspace $V$ of $\R^m$, with equality if $V=\R^m$. Necessity of inequality \eqref{bcct} is easily seen by testing
the Brascamp-Lieb inequality on certain characteristic functions $F_i$. These functions have minimal support such that the integrand on the left-hand side of \eqref{brasliebineq} is nonzero on a one-neighborhood in $\R^m$ of an arbitrarily large ball in $V$. Necessity of the reverse inequality in case $V=\R^m$ is obtained by using similarly an arbitrarily small ball in $\R^m$.

In this paper, we focus on singular Brascamp-Lieb inequalities. This variant has also seen much development 
in recent years, but still lacks  a general criterion
mirroring  the condition \eqref{bcct}.
A singular Brascamp-Lieb inequality incorporates a Calder\'on-Zygmund kernel
on the left hand side:
\begin{equation}\label{singbraslieb}
\Big | \int_{\R^m} \Big( \prod_{i=1}^n F_i(\Pi_i x)\Big) K(\Pi x)\, dx \Big | \leq C \prod_{i=1}^n \|F_i\|_{p_i} .
\end{equation}
Here $\Pi: \R^m\to \R^k$ is a surjective linear  map,  and by Calder\'on-Zygmund kernel we mean in this paper  a  
 tempered distribution $K$ on $\R^k$ whose
 Fourier transform $\widehat{K}$, called the multiplier associated with $K$, is a measurable function satisfying the symbol estimates
\begin{equation}\label{czsymbol}|\partial^\alpha \widehat{K}(\xi)|\le |\xi|^{-|\alpha|}
\end{equation}
for all $\xi\neq 0$ and all multi-indices $\alpha$ up to suitably large order.  

A necessary condition for the singular Brascamp-Lieb inequality \eqref{singbraslieb} can be obtained by specifying $K$ to be the Dirac delta, that is $\widehat{K}=1$. In this case, \eqref{singbraslieb} can be recognized 
as a classical Brascamp-Lieb inequality \eqref{brasliebineq} with integration over the kernel of $\Pi$. Condition \eqref{bcct} then yields the necessary condition
\begin{equation}\label{newbcct}
  \dim(V)  \le \sum_{i=1}^n \frac 1{p_i} \dim(\Pi_i|_{\ker \Pi} (V))
\end{equation}
for all $V\subseteq \ker \Pi$, with equality if $V=\ker \Pi$.

Lacking a   general necessary and sufficient condition, the theory of singular Brascamp-Lieb inequalities
remains at the stage of a case-by-case study. 
Here, for the first time, we study a sufficiently general family to expose a non-trivial role of the condition \eqref{newbcct}.
We focus on a case that features the following  cubical structure. For a parameter $m\ge 1$ we consider $\R^{2m}$ with coordinates
$$(x_1^0,\ldots ,x_m^0,x_1^1,\ldots ,x_m^1)^T,$$
which we also combine as pair of vectors $(x^0,x^1)^T$ or
we write as vector $x$.
Define the cube  $Q$  to be the set of functions 
$$j:\{1,2,\dots, m\}  \to \{0,1\}.$$ 
For $j\in Q$ define the projection $\Pi_j:\R^{2m} \rightarrow \R^m$ by
\begin{align*}%\label{definepij}
\Pi_j x = (x_1^{j(1)},x_2^{j(2)},\ldots ,x_m^{j(m)})^T.
\end{align*}
 Our main theorem states that for these particular projections $\Pi_j$
 and for the exponents $p_j=2^m$, inequalities \eqref{newbcct} provide a sufficient condition on an otherwise arbitrary surjective  linear map $\Pi:\R^{2m}\to \R^m$ for the singular Brascamp-Lieb inquality to hold.

\begin{theorem} 
\label{mainthm}
Given $m\ge 1$, there is an $N\ge 0$ such that for all 
surjective linear maps $\Pi:\R^{2m}\to \R^m$
the following are equivalent.
\begin{enumerate}
\item For all subspaces $V\subset {\ker  \Pi}$ we have
\begin{equation}
\label{cubebcct}
  \dim(V)  \le \sum_{j\in Q} 2^{-m} \dim(\Pi_j |_{\ker \Pi} (V)),
\end{equation}
with equality if $V=\ker \Pi$.
\item
For all $j\in Q$, the composed map $\Pi \Pi_j^T$  
is regular.
\item  There is a constant $C$ such that for all Calder\'on-Zygmund 
kernels $K$ satisfying the symbol estimates  \eqref{czsymbol} for all multi-indices up to degree $N$, and for all
  tuples of Schwartz functions $(F_j)_{j\in Q}$ we have
 \begin{equation}\label{maininequality}
\Big| \int_{\R^{2m}} \Big( \prod_{j\in Q} F_j(\Pi_j x)\Big ) K(\Pi x)\, dx\, \Big | \le C \prod_{j\in Q} \|F_j\|_{2^m} .
\end{equation}
\end{enumerate}
 \end{theorem}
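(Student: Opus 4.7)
The plan is to establish the cycle $(3)\Rightarrow(1)\Leftrightarrow(2)\Rightarrow(3)$. The first two implications are essentially linear-algebraic --- indeed, the direction $(3)\Rightarrow(1)$ is already sketched in the introduction --- and the main analytic work is concentrated in $(2)\Rightarrow(3)$.

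For $(3)\Rightarrow(1)$, I would take $\widehat{K}(\xi) = \chi(\xi/R)$ with $\chi$ a smooth compactly supported bump equal to $1$ near the origin; the symbol bounds \eqref{czsymbol} then hold uniformly in $R$, while $K(\Pi x)\,dx$ converges distributionally as $R\to\infty$ to Lebesgue measure on $\ker\Pi$. The uniform estimate from $(3)$ therefore passes to a classical Brascamp--Lieb bound on $\ker\Pi$, and \eqref{cubebcct} is exactly the corresponding \cite{BCCT} necessary condition. For $(1)\Leftrightarrow(2)$, the cubical symmetry yields $(\ker\Pi_j)^\perp = \ker\Pi_{\vec 1-j}$ in $\R^{2m}$, so regularity of $\Pi\Pi_j^T$ is equivalent to $\ker\Pi\cap\ker\Pi_{\vec 1-j}=\{0\}$. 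Since $j\mapsto\vec 1-j$ permutes $Q$, condition $(2)$ is the same as $\ker\Pi\cap\ker\Pi_j=\{0\}$ for every $j\in Q$, equivalently $\Pi_j|_{\ker\Pi}:\ker\Pi\to\R^m$ is a bijection for every $j$. On the other hand, the equality case of \eqref{cubebcct} at $V=\ker\Pi$ forces $\dim\Pi_j(\ker\Pi)=m$ for every $j$, which by rank-nullity is the same bijectivity; once these bijections hold, every subspace inequality in \eqref{cubebcct} reduces to the tautology $\dim V=\dim V$.

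For $(2)\Rightarrow(3)$, I would first exploit the regularity of each $\Pi\Pi_j^T$ to bring $\Pi$ into a normal form via changes of variable compatible with the cubical structure --- diagonal rescalings of each pair $(x_\ell^0,x_\ell^1)$, together with a base change on the output $\R^m$ of $\Pi$ --- and then handle the singular kernel via a Littlewood--Paley decomposition $K=\sum_n K_n$ into smooth pieces at dyadic frequency scales. The estimate for a fixed scale $n$ would be obtained by an iterated Cauchy--Schwarz along the $m$ cubical directions, in the spirit of the ``cube trick'' used in previous work on entangled singular integrals and Gowers-type norms: each step pairs opposite faces of the cube, doubles the ambient integration, and halves the number of distinct functions, so that after $m$ rounds the cubical structure has fully collapsed and the remaining form is controlled via Plancherel by tensor products of $L^2$ expressions of the $F_j$'s. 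A final re-interpolation recovers the $L^{2^m}$ norms of \eqref{maininequality}.

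The main obstacle is converting the per-scale estimate into a bound that is summable over all Littlewood--Paley scales. A naive application of the iterated Cauchy--Schwarz only extracts a factor of $\|\widehat{K_n}\|_\infty$ per scale, which does not sum in $n$. To close the argument one must instead apply the Cauchy--Schwarz to derivatives (or telescoped differences) of the pieces $K_n$, invoking the symbol bounds \eqref{czsymbol} to produce the geometric decay in $n$; matching the number of derivatives required to the $m$ rounds of Cauchy--Schwarz and to the combinatorics of the cube dictates the value of $N$ in the statement, and organizing this telescoping while tracking the cancellations along each face of the cube is where the bulk of the technical work will lie.
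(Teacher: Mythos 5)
Your treatment of $(3)\Rightarrow(1)$ and $(1)\Leftrightarrow(2)$ matches the paper's; these are quick and your linear algebra for the equivalence is essentially what the paper writes out. The substantive comparison is in $(2)\Rightarrow(3)$, where your plan captures the right flavor (normal form for $\Pi$, Cauchy--Schwarz along the cubical directions, concern about scale summability) but the mechanism you propose for the hard part does not match and, as stated, would not close.

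First, the architecture differs. You propose to run all $m$ rounds of Cauchy--Schwarz first, collapse the cube, and then sum the scales, invoking a ``final re-interpolation'' to recover $L^{2^m}$. The paper instead interleaves: it performs \emph{one} Cauchy--Schwarz per step of an induction on the number $l$ of reflection symmetries that the tuple $(F_j)$ already respects (Lemma~\ref{indstep}). Each Cauchy--Schwarz step symmetrizes one more face and feeds back into the same family of estimates at level $l+1$; no interpolation is needed because the $L^{2^m}$ normalization is preserved exactly throughout, and the base of the induction is not a Plancherel computation but the trivial observation that at level $l=m$ the vanishing condition~\eqref{vanishing} forces $K=0$. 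The appearance of a ``re-interpolation'' step in your sketch is a symptom of the plan not quite tracking the exponents, which in the paper's scheme come out for free.

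Second, and more importantly, your proposed cure for scale summability --- apply Cauchy--Schwarz to ``derivatives or telescoped differences of the pieces $K_n$'' and lean on the symbol bounds for geometric decay --- is not what makes the argument go. The paper's decay-in-scale mechanism is structurally different: after the single Cauchy--Schwarz at level $l$, the resulting form is a \emph{square} integrated against positive Gaussian weights, hence nonnegative. This positivity is the crucial ingredient you do not use. It allows one to dominate a fixed term by the \emph{sum} over $i\leq l+1$ and $k$ of the Gaussian-smoothed forms, whose multiplier $\widehat{K}_\Sigma$ telescopes exactly, via the heat-equation identity~\eqref{heat2}, to $\pi$ plus a remainder satisfying the vanishing condition~\eqref{vanishing}. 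The constant $\pi$ contributes a classical (non-singular) Brascamp--Lieb integral estimated directly by AM--GM, and the remainder is passed to the inductive hypothesis at level $l+1$. Before any of this can be set up, one also needs the cone decomposition of Lemma~\ref{geoct} to localize $\widehat{K}$ to a region where the Gaussian symbol $\widehat{g_{i,k_1,k_2}}((I\ A)^T\xi)$ is bounded away from zero, so that $\widehat{K}$ can be factored through it; this step is absent from your sketch. Without the positivity and the Gaussian telescoping, a Littlewood--Paley decomposition of $K$ with per-scale Cauchy--Schwarz leaves you with no way to sum the scales, exactly the obstacle you identify --- but the remedy lies in restructuring the estimate around a positive square and a heat-flow telescope, not in differentiating the kernel pieces.
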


 Condition (1) of Theorem \ref{mainthm} is the necessary condition
 derived from that of Bennett, Carbery, Christ, and Tao. In the
 present setting it can immediately be simplified.
 For $V=\ker \Pi$, the left-hand side of \eqref{cubebcct} is at least $m$, while  the right-hand side is at most $m$, because each summand is at most $2^{-m}m$ and there are $2^m$ summands.
 Assuming that inequality \eqref{cubebcct} holds for this $V$, we
 conclude actual equality for this $V$. We further conclude that
 the restriction of $\Pi_j$ to $\ker \Pi$ is injective for each $j$,
 and therefore equality in \eqref{cubebcct} holds for all
 subspaces $V$  of $\ker \Pi$. Thus condition (1) in Theorem \ref{mainthm} is equivalent to the single instance with $V=\ker \Pi $, which in turn
 is equivalent to all $\Pi_j$ being injective on $\ker \Pi $.

 It is now easy to see that conditions (1) and (2) in Theorem \ref{mainthm}
 are equivalent.
 Namely, let $j$ and $l$ be any two opposite corners of the
 cube. Then the range of $\Pi_j^T$ is obviously the kernel of
 $\Pi_l$ and $2^m$ dimensional. Hence regularity of $\Pi \Pi_j^T$
 is the same as injectivitiy of $\Pi$ on the kernel of $\Pi_l$.
 By the above discussion, conditions (1) and (2) are equivalent.

 We have already argued that (3) implies (1), hence the main content of
 the Theorem \ref{mainthm} is that (2) implies (3).

 While the projections $\Pi_j$ of Theorem \ref {mainthm} may appear rather particular, they provide no loss of generality up
 to change of variables after fixing
 their combinatorial datum, that is the set of integer tuples $(\dim(\Pi_j(V))_{j\in Q}$ with $V$ a subspace of $\R^{2m}$. For each $1\le i\le m$, there exist one-dimensional subspaces $V$ and $W$ of $\R^{2m}$, each spanning a certain standard coordinate axis, such that  $\dim(\Pi_j(V))=j(i)$ and $\dim(\Pi_j(W))=1-j(i)$ for all $j$.  
 Conversely, consider any collection of linear maps $(\widetilde{\Pi}_j)_{j\in Q}$ defined on $\R^{2m}$ with $m$ dimensional range, such that for each  $1\le i\le m$ there are spaces $V$ and $W$ with combinatorial datum analoguous as above. Then  these spaces necessarily are one dimensional and together span $\R^{2m}$. A suitable linear transformation of $\R^m$ will turn these vector spaces into the standard coordinate axes. Together with a suitable choice of basis for the range of each of the maps $\widetilde{\Pi}_j$, these maps will be identified as the above maps $\Pi_j$.

 The role of the cubical structure of the form in this theorem 
 is to allow for a symmetrization process in the tuple of functions $F_j$. Indeed, the main Lemma \ref{indstep} stated in Section \ref{sec:inductivest} is an induction over the number 
 of axis parallel symmetry planes of this cube that the tuple $F_j$ respects, in the sense of \eqref{reflectct}.
 This symmetrization procedure, sometimes called twisted technology, originates  in a series of papers such as \cite{vk:tp}, \cite{vk:bell}, \cite{pd:Lp}.
 Theorem \ref{mainthm} in the case $m=2$ generalizes estimates in \cite{pd:L4} and \cite{dkst}.

Further generalizations of Theorem \ref{mainthm} appear desirable, but  are beyond the scope of the present paper, except for a mild vector-valued generalization  in Lemma \ref{indstep}.
Most naturally, one could seek an extension to other exponents $p_j$ and ask for an optimal range of exponents. One may also seek generalizations in which the index set is a subset of the cube. This can sometimes be achieved by setting some functions $F_j$ constantly equal to one, provided one has bounds with $p_j=\infty$.
A further question concerns the exact dependence on $\Pi$ 
of the bounds in the theorem.

To elaborate some of the difficulties in the absence of the cubical structure, we briefly discuss a singular Brascamp-Lieb integral
with three input functions. We take $m=4$ and $k=2$, $k_i=2$ and  $p_i=3$ for $i=1,2,3$. 
The projections $\Pi$ and $\Pi_i$ are then given by $2\times 4$ matrices, which we write as block matrices $(B \  A)$ and $(B_i \ A_i )$ with quadratic blocks.
Choosing coordinates suitably on domain and range of $\Pi$, we may assume that
$$\Pi=(\begin{array}{cc} 0 &  I\end{array})$$
with the identity matrix $I$. In order to not violate \eqref{newbcct} with $V$ equal to $\ker\Pi$, the matrices $B_i$ need
to be regular. Changing coordinates on the range of $\Pi_i$, we may assume $B_i=I$ for each $1\le i\le 3$.
 Warchalski, in his PhD thesis  \cite{mw}, classifies the possiblilities for the remaining parameters
$A_1$, $A_2$, $A_3$ into nine cases. 
Most cases can be normalized such that $A_1=0$ and $A_2=I$,
leaving only $A_3$ as indetermined matrix.
A trivial case occurs if $A_3=0$ or $A_3=I$, this results in a reduction of the complexity of the integral by combining
$F_3$ with one of the other functions by a pointwise product. The case that all eigenvalues of $A_3$ are different 
from $0$ and $1$ is the generic two dimensional version
of the bilinear Hilbert transform \cite{lt}. The known proofs of the singular Brascamp-Lieb inequality in this case require the technique of time-frequency analysis, which is somewhat
different from the technique in the present paper. The case that one eigenvalue of $A_3$ is equal to $0$ or $1$ and the
other eigenvalue is different from $0$ and $1$ is an interesting hybrid case discussed in \cite{dt:2dbht}. The case when $A_3$ has both $0$ and $1$ as eigenvalue
is called the twisted paraproduct and is an instance of the forms in Theorem \ref{mainthm} with $m=2$, albeit with the fourth
function set constant equal to $1$. 
The only case in Warchalski's thesis  where the singular Brascamp-Lieb inequality is not known to hold is the one where the first columns of all three
matrices $A_1,A_2,A_3$ vanish, while the second columns  are $(0,0)^T$, $(0,1)^T$, $(1,0)^T$, respectively.
Thanks to the vanishing first columns, one variable integrates out trivially and one reduces to a one-dimensional Calder\'on-Zygmund kernel.
The paradigmatic example in this case is the conjectured inequality
$$\Big | \int_{\R^3} F(x,y) G(y,z) H(z,x) \frac 1{x+y+z} \, dxdydz\Big | \le
C \|F\|_3\|G\|_3\|H\|_3,$$
 where the left-hand side is called the triangular Hilbert transform. Proving the displayed a priori bound is one of the most
 intriguing open problems in the area of singular Brascamp-Lieb inequalities. Partial progress on this problem can be found in \cite{pz:splx}
 based on the approach in \cite{tt}, and in \cite{dkt}, \cite{ktz:tht}.
 
A more detailed survey of singular Brascamp-Lieb inequalities appears in 
\cite{dt}.

\section{Symmetry considerations and the inductive statement}
\label{sec:inductivest}

Theorem \ref{mainthm} will be proven by induction. The inductive statement is the content of  Lemma \ref{indstep} below. In this section we further discuss  certain symmetries of the singular Brascamp-Lieb integrals \eqref{maininequality}, which will be needed in the proof of the inductive statement. 

For the rest of the paper, we  consider a higher-dimensional generalization of the singular Brascamp-Lieb inequality  \eqref{maininequality},  motivated by the  related paper \cite{patterns}  on certain patterns in positive density subsets of the Euclidean space. We  write vectors as column vectors and identify $x\in (\R^{d})^{2m}$ with a vector of vectors as
$$(x_1^0,\ldots ,x_m^0,x_1^1,\ldots ,x_m^1)^T,$$
where $x_1^0,\ldots ,x_m^0,x_1^1,\ldots ,x_m^1 \in \R^{d}$, which we also combine into
a pair of vectors $(x^0,x^1)^T$. 
For $j\in Q$,  let $\Pi_j: (\R^{d})^{2m} \rightarrow (\R^d)^m$  be given by 
\begin{align*}
\Pi_j x = (x_1^{j(1)},x_2^{j(2)},\ldots ,x_m^{j(m)})^T.
\end{align*}
 We define an action of an $m \times m$ matrix $A$ on a vector $y=(y_1,\ldots, y_m)^T\in (\R^{d})^m$ by the Kronecker product of the matrix $A$ with the $m\times m$ identity matrix $I$ 
$$Ay:= (A\otimes I)y,\ \ (Ay)_i = \sum_{j=1}^m a_{ij}y_j  $$
for $1\leq i \leq m$.
In similar fashion, we identify  $\Pi_j: (\R^{d})^{2m} \rightarrow (\R^d)^m$ with  $m\times 2m$ matrices. 
We also restrict attention to those projections 
$\Pi:(\R^d)^{2m}\to (\R^d)^m$ which are given as analoguous block matrix product as
\begin{equation}
\label{bact}\Pi x =(\begin{array}{cc} B &  A\end{array}) x,
\end{equation} 
where $A$ and $B$ are $m\times m$ matrices and $x\in (\R^{d})^{2m}$.
This setup makes our higher-dimensional generalization a 
very simple extension of the one-dimensional theory. 

It is no restriction to assume that all functions $F_j$
in Theorem \ref{mainthm} are real valued.
Schwartz functions in this section will map $F_j:(\R^{d})^m$ to $\R$
and multipliers  $\widehat{K}$ will map $(\R^d)^m \setminus~ \{0\}$ 
to $\mathbb{C}$.

\begin{lemma}[{Symmetries of \eqref{maininequality}}]
	\label{lemma:symmetries}	
	The following  two statements hold.
\begin{enumerate}
\item  \label{sym:scale}
Let $D$ be an $m\times m$ diagonal matrix of rank $m$. Let $\widetilde{D}$ be a $2m\times 2m$ matrix which decomposes into four blocks of size $m\times m$,  the two blocks on the diagonal being $D$ and the  two off-diagonal blocks being $0$.
Let $1\leq p_j \leq \infty$ with $\sum_{j\in Q}  \frac{1}{p_j} =1$.  Then  
\begin{align}\label{FFtilde}
 \int_{(\R^d)^{2m}}\Big ( \prod_{j\in Q} F_j(\Pi_j x) \Big ) K(\Pi x)\, dx =  \int_{(\R^d)^{2m}} \Big ( \prod_{j\in Q} \widetilde{F}_j({\Pi}_{j}x) \Big )\widetilde{K}(\widetilde{\Pi}x) dx
\end{align}  
holds with   
$$\widetilde{F}_j(y):=\det(D)^\frac{d}{p_j} F_j(D y), \  \widetilde{K}(y): = \det(D)^d K(Dy),\ \widetilde{\Pi} := {D}^{-1} \Pi \widetilde{D}.$$

\item    \label{sym:permute}
Let $P$ be a permutation of $m$ elements, which we also identify with the $m\times m$ matrix in which the $ij$--th entry equals $\delta_{P(i)j}$ in the Kronecker delta notation.
Let $\widetilde{P}$ be a $2m\times 2m$ matrix which decomposes into four blocks of size $m\times m$,  the two blocks on the diagonal being $P$ and the  two off-diagonal blocks being $0$.
 Then  \eqref{FFtilde} holds with  
$$  \widetilde{F}_j(y) :=  F_{j\circ P}(Py),\   
\widetilde{K}(y):=K(Py),\ \widetilde{\Pi}:= P^{-1} \Pi \widetilde{P}.$$ 
\end{enumerate}
\end{lemma}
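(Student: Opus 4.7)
The plan is to prove both parts by direct change of variables, using that $\widetilde{D}$ and $\widetilde{P}$ are block-diagonal and therefore interact very cleanly with the cubical projections $\Pi_j$. The key observation in each case is a simple intertwining identity: $\Pi_j\widetilde{D}=D\Pi_j$ for part (1), and $\Pi_j\widetilde{P}=P\Pi_{j\circ P^{-1}}$ (equivalently $P^{-1}\Pi_j\widetilde{P}=\Pi_{j\circ P^{-1}}$) for part (2). Both follow from inspection of the $i$-th coordinate: since $\widetilde{D}$ scales the $i$-th slot of each of $x^0,x^1$ by $d_i$ and $\Pi_j$ keeps the $i$-th slot of $x^{j(i)}$, the order of the two operations does not matter; and since $\widetilde{P}$ permutes the positions within each of $x^0,x^1$ by the same permutation $P$, composition with $\Pi_j$ amounts to reindexing the bitstring $j$ through $P$. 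The relation with $\widetilde{\Pi}$ is then built into its definition: $\Pi\widetilde{D}=D\widetilde{\Pi}$ and $\Pi\widetilde{P}=P\widetilde{\Pi}$.

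For part (1), I would substitute $x=\widetilde{D}u$ on the left-hand side of \eqref{FFtilde}. The Jacobian (remembering that each coordinate lives in $\R^d$) is $|\det\widetilde{D}|^d=|\det D|^{2d}$, and using the intertwining relation and the hypothesis $\sum_{j\in Q}1/p_j=1$, I split
\[
|\det D|^{2d}=|\det D|^d\cdot\prod_{j\in Q}|\det D|^{d/p_j}
\]
and absorb one factor of $|\det D|^d$ into $\widetilde{K}(y)=(\det D)^d K(Dy)$ and each factor $|\det D|^{d/p_j}$ into $\widetilde{F}_j(y)=(\det D)^{d/p_j}F_j(Dy)$, matching the definitions (under the convention $\det D>0$, as implicit in the statement). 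This yields exactly the right-hand side.

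For part (2), I would substitute $u=\widetilde{P}x$, so that $dx=du$ since $|\det\widetilde{P}|=1$. Then $\Pi\widetilde{P}x=\Pi u$ and, via the intertwining identity, $P\Pi_j x=\Pi_{j\circ P}u$. Hence the right-hand side becomes
\[
\int_{(\R^d)^{2m}}\Big(\prod_{j\in Q}F_{j\circ P}(\Pi_{j\circ P}u)\Big)K(\Pi u)\,du,
\]
and the substitution $j\mapsto j\circ P^{-1}$, which is a bijection of $Q$ to itself, reindexes the product to recover the left-hand side. I expect no genuine obstacle; the only care required is the bookkeeping to confirm the intertwining identities above, which is essentially the assertion that the cubical projections $\Pi_j$ are compatible with diagonal scaling and coordinate permutations acting ``simultaneously'' on the two copies $x^0,x^1$.
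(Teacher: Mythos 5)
Your proposal is correct and follows essentially the same argument as the paper: a change of variables by $\widetilde{D}$ (resp.\ $\widetilde{P}$), the intertwining identities $\Pi_j\widetilde{D}=D\Pi_j$ and $\Pi_j\widetilde{P}=P\Pi_{j\circ P^{-1}}$, the Jacobian bookkeeping via $\sum_j 1/p_j=1$, and a reindexing of the product over $Q$ by the bijection $j\mapsto j\circ P$. The only cosmetic difference is that in part (2) you transform the right-hand side into the left-hand side rather than the reverse, which is the same computation read backwards.
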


\begin{proof}
Proof of \eqref{sym:scale}.  Changing variables by $\widetilde{D}$ we have for the left-hand side of \eqref{FFtilde}
\begin{align*}
\det(\widetilde{D})^d \int_{(\R^d)^{2m}}
\Big(  \prod_{j\in Q} F_j(\Pi_j \widetilde{D}x) \Big ) K(\Pi \widetilde{D}x)\, dx. 
\end{align*}
Using $D\Pi_j=\Pi_j \widetilde{D}$
thanks to the special structure of the projections $\Pi_j$, and using 
$$\det(\widetilde{D})^d=\det(D)^{2d}$$ 
and 
$\sum_{j\in Q} \frac{1}{p_j}=1$,  the previous display equals 
\begin{align}\nonumber
& \det(D)^d\int_{(\R^{d})^{2m}} 
\Big( \prod_{j\in Q}  \det(D)^\frac{d}{p_j} F_j(D\Pi_j x)\Big ) K(\Pi \widetilde{D}x)\, dx.
\end{align}
With notation as in \eqref{sym:scale} of the lemma, this becomes the right-hand side of \eqref{FFtilde}.

Proof of (2). We compute  similarly as above
\begin{align*}   
  &\int_{(\R^{d})^{2m}}\Big( \prod_{j\in Q} F_j(\Pi_j x)\Big ) K(\Pi x)\, dx    
    =   \int_{(\R^{d})^{2m}} \Big( \prod_{j\in Q}  {F}_j(\Pi_j\widetilde{P}x) \Big )K(\Pi \widetilde{P}x) \, dx \\
     = & \int_{(\R^{d})^{2m}} \Big( \prod_{j\in Q}  {F}_j(P {\Pi}_{j\circ P^{-1}}x) \Big )\widetilde{K}(\widetilde{\Pi}x) dx
    =  \int_{(\R^{d})^{2m}} \Big( \prod_{j\in Q} {F}_{j\circ P}(P{\Pi}_{j}x)\Big )\widetilde{K}(\widetilde{\Pi}x) dx,
\end{align*}
with notation as in  \eqref{sym:permute} of the lemma.
 \end{proof}

To prove Theorem \ref{mainthm} it suffices to consider the singular Brascamp-Lieb integral
\begin{equation}\label{sblA}
\Lambda(K,A):=\int_{(\R^{d})^{2m}} \Big ( \prod_{j\in Q} F_j(\Pi_j x)\Big ) K((I\ A)x)\, dx.
\end{equation}
This is justified as follows.
Note that if $K$ is a Calder\'on-Zygmund kernel on $\R^{dm}$, 
then so is a certain nonzero scalar multiple of 
$\widetilde{K}$ defined by
$$\widetilde{K}(Eu)=K(u)$$
for some regular matrix $E$. Hence
$$\widetilde{K}(\widetilde{\Pi}u)=K(\Pi u)$$
with $\widetilde{\Pi}:=E\Pi$.
Regularity of $\widetilde{\Pi}\Pi_j ^T$ is equivalent to
regularity of ${\Pi}\Pi_j ^T$, so we may use this flexibility 
to replace the matrix
$(B \  A )$ in \eqref{bact} by $(EB \ EA)$ and therefore assume that 
the matrix $B$ is diagonal and idempotent.
Regularity of all  matrices $\widetilde{\Pi}\Pi_j ^T$ then 
requires $B$ to be the identity matrix.

Let $1\le i\le m$ act by reflection $j\mapsto i*j$ on the cube $Q$,
where
$$(i* j)(k):= j(k)$$
if $i\neq k$ and 
$$(i *j)(i):= 1-j(i).$$
Denote the Gaussian on $\R^s$ by $g(x):=e^{-\pi |x|^2}$ and write $g_t(x):=t^{-s}g(t^{-s}x)$, where $s$ is to be understood from the context, typically $s=1$,
$d$, $md$, $2md$ or $(2m-2)d$. 
By $\partial_jf$ we denote the $j$-th partial derivative of a function $f$.
Recall that the Hilbert-Schmidt norm $\|A\|_{HS}$ of a matrix $A$ 
is monotone in each of its arguments and dominates the  operator norm
$\|A\|$.

\begin{lemma}[The inductive statement]\label{indstep}
Let $m\ge 1$, $d\geq 1$. Let $0\le l\le m$.  Let $0<\epsilon<1$.
There exists a constant $C$ depending on these parameters 
such that the following holds.

Let $A$ be an $m\times m$ matrix such that
\begin{equation} 
\label{aepsilonct}
|\det ((I \ A)\Pi_j^T) |>\epsilon \quad \textup{and} \quad  \|A\|_{HS}\le \epsilon^{-1}
\end{equation}
for all $1\le j\le m$.
Assume that the first $l$ rows of $A$ coincide with the first
$l$ rows of $-I$.  Let $(F_j)_{j\in Q}$ be a tuple of real valued Schwartz functions with
\begin{align}\label{reflectct}
F_j=F_{i*j} \quad \textup{and} \quad  \|F_j\|_{2^m}=1
\end{align}
for all $j\in Q$ and all $1\le i\le l$. Then the following two estimates hold for \eqref{sblA}.
\begin{enumerate}
	\item Let $K$ be a kernel such that 
	$$|\partial^\alpha \widehat{K}(\xi)|\le |\xi|^{-|\alpha|}$$
	for all multi-indices $\alpha \in \mathbb{N}_0^{dm}$ with $|\alpha|\le 3dm$ and 
\begin{align}
\label{vanishing}
\widehat{K}(\xi_1,\dots, \xi_l, 0,\dots ,0)\equiv 0,
\end{align}
 that is, $\widehat {K}$ vanishes for all $0\neq (\xi_1,\ldots , \xi_m)\in (\R^d)^m$ with $\xi_k=0$ for $k>l$.  Then 
 $$|\Lambda(K,A)|\le C.$$
 
   \item Let $l<i\le m$ and $1\leq k_1,k_2 \leq d$. Let $u \in \R^{dm}$ and let $c\in \L^\infty(0,\infty)$ with $\|c\|_\infty=1$. Let $K$ be the kernel defined by
   $$\widehat{K}(\xi) = \int_0^\infty c_t(u)\, \widehat{g_{i,k_1,k_2}} ((I\ A)^T (t\xi)) e^{2\pi i u\cdot t\xi} \frac{dt}{t},$$
   where $g_{i,k_1,k_2} := \partial_{(i-1)d+k_1} \partial_{(i+m-1)d+k_2} g$. 
  Then  
 $$|\Lambda(K,A)|\le C(1+\|u\|)^{2d(m-1)}.$$
\end{enumerate}
\end{lemma}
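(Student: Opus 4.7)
The plan is to prove both parts of Lemma \ref{indstep} simultaneously by downward induction on $l$, from $l=m$ down to $l=0$; the case $l=0$ of part (1) yields the implication $(2)\Rightarrow(3)$ of Theorem \ref{mainthm}. The base case $l=m$ is immediate: in part (1), the vanishing hypothesis \eqref{vanishing} forces $\widehat K\equiv 0$ off the origin, so $\Lambda(K,A)=0$; in part (2), the range $l<i\le m$ is empty, so the claim is vacuous.

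For the inductive step from $l+1$ to $l$, the main work is in part (1). The core move is a Cauchy--Schwarz applied in the variable pair $(x_{l+1}^0, x_{l+1}^1)$, splitting the product $\prod_{j\in Q}F_j$ according to the value of $j(l+1)$ into the pairings corresponding to the two faces of the cube perpendicular to axis $l+1$. The assumption that the first $l$ rows of $A$ already match those of $-I$ is precisely what lets one recast the integral as such a pairing compatibly with $K$. Cauchy--Schwarz then duplicates the axis-$(l+1)$ pair, producing a new singular Brascamp--Lieb form in which the resulting tuple of functions automatically satisfies the reflection symmetry $F_j=F_{(l+1)*j}$; Lemma \ref{lemma:symmetries} is used to normalize so that the first $l+1$ rows of the modified matrix coincide with those of $-I$. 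The duplication introduces a shift $u$ between the two copies of $(x_{l+1}^0, x_{l+1}^1)$, and the kernel $K$ is replaced by a bilinear composition coupled through this shift.

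This composed kernel is then decomposed via a Calder\'on reproducing formula at Gaussian scales $t$ and expanded in a Hermite-type basis adapted to axis $l+1$. The low-frequency, non-twisted pieces are absorbed into the stronger vanishing \eqref{vanishing} at level $l+1$ and handled by the inductive part (1); the high-frequency twisted pieces are exactly the Gaussian-derivative kernels treated by the inductive part (2), with the $u$-dependence tracking the Cauchy--Schwarz shift. Summing in the scale $t$ and the Hermite index, using the polynomial bound $(1+\|u\|)^{2d(m-1)}$ of part (2) together with the rapid decay of the Hermite coefficients of a Calder\'on--Zygmund kernel, produces a convergent bound at level $l$. Part (2) at level $l$ is then obtained as a corollary: the factor $\widehat{g_{i,k_1,k_2}}$ causes $\widehat K$ to vanish whenever the $i$-th block of frequencies vanishes and $i>l$, so the vanishing \eqref{vanishing} at level $l$ holds automatically, while the oscillatory factor $e^{2\pi i u\cdot t\xi}$ combined with the Gaussian decay gives Calder\'on--Zygmund symbol estimates with a controlled polynomial loss in $\|u\|$, after which the already-established part (1) at level $l$ delivers the estimate.

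The main obstacle is the kernel decomposition after Cauchy--Schwarz: one must design the Hermite/Littlewood--Paley decomposition so that the complement of the low-frequency piece is expanded \emph{exactly} in the rigid Gaussian-derivative form of part (2) at level $l+1$, with the precise dependence on $u$ dictated by the Cauchy--Schwarz shift. Tracking this $u$-dependence through Cauchy--Schwarz and the subsequent summation in scales is where the exponent $2d(m-1)$ is essential and where the induction closes without accumulating; one must also verify that the non-degeneracy \eqref{aepsilonct} is preserved through the at most $m$ iterations, with $\epsilon$ worsening only by a controlled amount.
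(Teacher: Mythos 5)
Your framework — downward induction on $l$ with base case $l=m$, a Cauchy--Schwarz to create the axis-$(l+1)$ reflection symmetry, and a split of the kernel into a piece handled by (1) at level $l+1$ and a piece handled by (2) — is in the right spirit, but the dependency between parts (1) and (2) is reversed in a way that breaks the argument. The paper reduces (1) at level $l$ to (2) at the \emph{same} level $l$ (via a geometric cone decomposition, Lemma~\ref{geoct}, together with a two-parameter Littlewood--Paley decomposition — not a Hermite expansion), and then reduces (2) at level $l$ to (1) at level $l+1$. You instead propose to derive (2) at level $l$ as a corollary of (1) at level $l$, observing that the multiplier of (2) vanishes when $\xi_i=0$. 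This step fails quantitatively: the modulation $e^{2\pi i u\cdot t\xi}$ degrades the symbol estimates to $|\partial^\alpha\widehat K(\xi)|\lesssim (1+\|u\|)^{|\alpha|}|\xi|^{-|\alpha|}$, so invoking (1), which takes $3dm$ derivatives, yields $|\Lambda(K,A)|\lesssim (1+\|u\|)^{3dm}$, strictly worse than the required $(1+\|u\|)^{2d(m-1)}$ for $m\geq 2$, and too large for the $u$-integral $\int_{\R^{dm}}(1+\|u\|)^{-3dm}\cdot(\textrm{bound from }(2))\,du$ in the proof of (1) to converge. Raising the order of derivatives does not help, since the loss in your version of (2) grows by exactly the same amount. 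The paper instead obtains the sharp exponent by dominating a non-centered Gaussian by a centered one at the cost of an anisotropic dilation $D$ acting only on the $m-1$ non-$i$ coordinates, whence $\det(D)^{2d}\sim(1+\|u\|)^{2d(m-1)}$.

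A second essential ingredient is missing: the heat-equation telescoping identity. After Cauchy--Schwarz and normalization, the paper reaches a nonnegative, fully symmetric form, sums it over all $1\le i\le l+1$ and $1\le k\le d$, and uses $-t\partial_t|\widehat g(t\xi_i)|^2 = \frac1\pi\sum_{k=1}^d|\widehat{\partial_k g}(t\xi_i)|^2$ to show that the summed multiplier $\widehat K_\Sigma$ restricted to $\{\xi_k=0 : k>l+1\}$ equals the constant $\pi$. The $\pi$ part is the Dirac-delta kernel, whose Brascamp--Lieb form is estimated directly via the arithmetic--geometric mean inequality, and the remainder $\widehat K_\Sigma-\pi$ then satisfies the vanishing hypothesis \eqref{vanishing} at level $l+1$, enabling the inductive call to (1). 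Your outline contains no substitute for this cancellation; an expansion in a Hermite-type basis does not by itself convert the Gaussian-derivative multiplier into a Dirac delta plus a multiplier vanishing at level $l+1$. Relatedly, the Cauchy--Schwarz cannot be performed directly on a general Calder\'on--Zygmund kernel as you suggest inside part (1): the paper must first reduce, via the cone decomposition, to the rigid Gaussian-derivative form of (2), because only in that form does the spatial realization \eqref{lambdakact} split into the two brackets that Cauchy--Schwarz pairs.
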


Note that the case $l=m$ of (1) is trivially true since then
$K=0$.  On the other hand, (2) is void for $l=m$ since then $l<i\leq m$ does not exist.
The case $l=0$ of (1) implies the desired Theorem 
\ref{mainthm}. We will therefore do an induction on $l$, 
proving Lemma \ref{indstep} assuming that we have already established the lemma for all $l<l'\le m$. 
We will reduce (1) at level $l$ to (2) at the same level $l$, and we will reduce (2) at level $l$ to (1) at the level $l+1$. These two reductions will be performed in the following two sections. 

Note that in the case $m=1$ we are dealing with a one-dimensional Calder{\'on}-Zygmund kernel  and the claim follows from the standard Calder{\'o}n-Zygmund theory. We shall therefore assume $m\geq 2$.

\section{Proof of (1) of Lemma \ref{indstep} }
Consider $m,d,l,\epsilon$ as in Lemma \ref{indstep}. We shall 
prove existence of a constant $C$ such that (1) holds,
under the hypothesis that for the same $m,d,l,\epsilon$ 
there is a constant $C$ such that (2) holds.
  
Let $A$, $(F_j)_j$ and $K$ be given as in (1) of Lemma \ref{indstep}.
Our aim is to decompose ${K}$ into a  convergent sum and integral of kernels defined in $(2)$ of Lemma \ref{indstep}.

We will perform a cone decomposition of $\widehat{K}$. The matrix $A$
determines certain subspaces of $(\R^d)^m$, and each cone will be small enough to avoid some of these subspaces, as elaborated in the following lemma.
In this section we use the notational convention 
$$\xi=(\xi',\xi'')\in \R^{dl}\times \R^{d(m-l)}=\R^{dm}.$$
 
\begin{lemma}\label{geoct}
   
	There is a number $\delta>0$ depending on $\epsilon$, $d$, and $m$, such that the following holds. For $\gamma$ a unit vector in $\R^{d(m-l)}$ define the stick
$$S=\left\{(0,\xi'') \in \R^{dm}: \frac 12 \le \|\xi''\|\le 1, 
\left\|\frac {\xi''}{\|\xi''\|} -\gamma \right\|\le \delta\right\}.$$	
Then there is  $l< i\leq m$ and some $1\leq k_1,k_2 \leq d$ such that for all $\eta\in S$ we have
	\begin{equation}\label{geolemmact}
	\min( |\eta_{ik_1}|,|(A^T\eta)_{ik_2}| ) > \delta,
	\end{equation}
where we write $\eta=(\eta_1,\ldots, \eta_m)^T$, $\eta_j=(\eta_{j1},\ldots, \eta_{jd})^T$ for $1\leq j \leq m$, and analogously we write the coordinates of   $A^T\eta$.  
	
\end{lemma}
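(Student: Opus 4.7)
The plan is to translate the hypothesis on $A$ into a uniform lower bound on the smallest singular value of every principal submatrix of the bottom-right $(m-l)\times(m-l)$ block $B:=A_{22}$, and then use this bound to find, for any unit vector $\gamma$, a single index $i'$ at which both $\gamma$ and $B^T\gamma$ have a macroscopically large $\R^d$-block.

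\emph{Reduction and principal minors.} Since the first $l$ rows of $A$ coincide with those of $-I$,
\[
A=\begin{pmatrix}-I_l & 0 \\ A_{21} & B\end{pmatrix},
\]
so $(A^T\eta)_i=(B^T\xi'')_{i-l}$ for $i>l$ whenever $\eta=(0,\xi'')$. Identifying $j\in Q$ with $T=\{k:j(k)=1\}$ and factoring $\Pi_j=(D_j\ E_j)$ with $D_j=\mathrm{diag}(1-j(\cdot))$ and $E_j=\mathrm{diag}(j(\cdot))$, we have $(I\ A)\Pi_j^T=D_j+AE_j$, whose columns indexed by $T^c$ are the standard basis vectors; iterated Laplace expansion gives $\det((I\ A)\Pi_j^T)=\det(A[T])$, the $T\times T$ principal minor of $A$. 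The block structure of $A$ further yields $|\det A[T]|=|\det B[T\cap\{l+1,\dots,m\}]|$, so the hypothesis upgrades to $|\det B[S]|>\epsilon$ for every $S\subseteq\{1,\dots,m-l\}$. Since $\|B[S]\|\le\|B\|_{HS}\le\epsilon^{-1}$ and $|\det B[S]|=\prod_i\sigma_i(B[S])$, one obtains the uniform lower bound $\sigma_{\min}(B[S])\ge\epsilon^{m-l}$.

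\emph{A common large index on the sphere.} The core claim is that for a suitable $\delta'=\delta'(\epsilon,m,d)>0$, every unit $\gamma\in\R^{d(m-l)}$ with $\R^d$-blocks $\gamma_{i'}$ admits $i'\in\{1,\dots,m-l\}$ satisfying $\|\gamma_{i'}\|\ge\delta'$ and $\|(B^T\gamma)_{i'}\|\ge\delta'$. Argue by contradiction: let $I=\{i':\|\gamma_{i'}\|>\delta'\}$ (nonempty once $\delta'<1/\sqrt{m-l}$) and assume $\|(B^T\gamma)_{i'}\|\le\delta'$ for every $i'\in I$. Let $P_I$ denote the block-projection onto the indices in $I$; then $\|P_{I^c}\gamma\|\le\sqrt{m-l}\,\delta'$ and $\|P_IB^T\gamma\|\le\sqrt{m-l}\,\delta'$. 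Since $P_IB^TP_I$ acts on the $I$-blocks as $(B[I])^T\otimes I_d$, the triangle inequality with $\|B\|\le\epsilon^{-1}$ yields
\[
\|(B[I])^TP_I\gamma\|\le \|P_IB^T\gamma\|+\|P_IB^TP_{I^c}\gamma\|\le (1+\epsilon^{-1})\sqrt{m-l}\,\delta'.
\]
On the other hand, $\|(B[I])^TP_I\gamma\|\ge\sigma_{\min}(B[I])\|P_I\gamma\|\ge\epsilon^{m-l}/2$ provided $\delta'$ is small enough to force $\|P_I\gamma\|\ge 1/2$. Taking $\delta'$ yet smaller so that $(1+\epsilon^{-1})\sqrt{m-l}\,\delta'<\epsilon^{m-l}/2$ produces the desired contradiction.

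\emph{Passage to the stick.} With $i'$ chosen, select $k_1,k_2\in\{1,\dots,d\}$ maximizing $|\gamma_{i',k_1}|$ and $|(B^T\gamma)_{i',k_2}|$; by pigeonhole both are $\ge\delta'/\sqrt{d}$. For $\eta=(0,\xi'')\in S$, write $\xi''=\|\xi''\|\tilde\gamma$ with $\|\tilde\gamma-\gamma\|\le\delta$ and $\|\xi''\|\ge 1/2$; the triangle inequality with $\|B\|\le\epsilon^{-1}$ gives
\[
|(\xi'')_{i',k_1}|\ge\tfrac12\bigl(\delta'/\sqrt{d}-\delta\bigr),\qquad |(B^T\xi'')_{i',k_2}|\ge\tfrac12\bigl(\delta'/\sqrt{d}-\epsilon^{-1}\delta\bigr).
\]
Choosing $\delta$ sufficiently small in terms of $\epsilon,\delta',d$ makes both quantities exceed $\delta$, and setting $i=i'+l$ completes the proof. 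The main obstacle is the middle step: without the principal-minor bound on $B$, the mass of $B^T\gamma$ could lie entirely on blocks disjoint from $I$ (take $B$ to be a non-trivial permutation of the blocks), and the conclusion would genuinely fail.
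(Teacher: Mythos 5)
Your proof is correct, and it takes a genuinely different route from the paper. The paper argues by contradiction via pigeonholing and a volume comparison: if no good point existed in the stick $S$, then by pigeonholing over the $2^m$ corners $j$ one finds a subset $S'\subseteq S$ of comparable measure on which $\Psi A_j^T$ maps $S'$ into a small ball of radius $\sim\delta$; since $\Psi S'$ has volume $\gtrsim\delta^{d(m-l)-1}$ (it is a stick) while the image has volume $\lesssim\delta^{d(m-l)}$, the determinant lower bound $|\det(\Psi A_j^T\Psi^T)|>\epsilon$ yields a contradiction for small $\delta$. You instead extract explicit linear-algebraic information: you identify $\det((I\ A)\Pi_j^T)$ with a principal minor $\det A[T]$, reduce (via the block triangular form forced by the first $l$ rows of $A$) to principal minors of the block $B=A_{22}$, and combine the minor lower bound with the Hilbert--Schmidt upper bound to obtain a uniform lower bound $\sigma_{\min}(B[I])\ge\epsilon^{m-l}$ for every principal submatrix. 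The contradiction then lives entirely at the center $\gamma$ of the stick: assuming every large block of $\gamma$ gets annihilated under $B^T$, you apply the singular value bound to $B[I]$ and contradict it. The perturbation from $\gamma$ to the whole stick is the same in both proofs. Your approach is more algebraic and yields an explicit, trackable value of $\delta$ in terms of $\epsilon,m,d$; the paper's measure-theoretic argument is shorter to state because it does not need the principal minor identity or the singular value estimate, at the cost of a less explicit constant. Both are valid.
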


 \begin{proof}
 
 We first claim that $S$ contains a point $\xi$ 
such that there is $l< i\leq m$ with 
\begin{align}\label{cancellativecond1ct}
\min(\|\xi_i\|, \|(A^T\xi)_i\|) > 4d\max(1,\|A\|)\delta .
\end{align} 
Assume to get a contradiction that the claim is false.
For every $\xi\in S$   we choose $j\in Q$ such that for
$l < i\le m$ the value of $j(i)$ corresponds to which term on the left hand side of \eqref{cancellativecond1ct} is less than or equal to the right-hand side. Hence we obtain
\begin{align}\label{nonconcellativect}
\|(A_j^T\xi)_i\|\leq 4d\max(1,\|A\|)\delta,
\end{align}
where we have denoted $A_j:=(I\ A)\Pi_j^T.$
By pigeonholing with respect to the $2^m$ elements of $Q$, there exists $j\in Q$ and $S'\subseteq S$ of size $|S'|\geq 2^{-m}|S|$ such that 
\eqref{nonconcellativect} holds for this same $j$ and
all $\xi \in S'$ and $l< i \leq m$.

To obtain a contradiction, we compare the volume of $\Psi S'$,
where $\Psi$ is projection onto the $d(m-l)$ dimensional space spanned by the last components,
with that of the linear image 
$$\Psi A_j^T \Psi^T \Psi S'=\Psi A_j^T S'.$$ 
We obtain
\begin{equation}\label{stickballct}
c(d,m) \delta^{d(m-l)-1}  \leq   |\Psi S'|
=     |\det (\Psi A_j^T \Psi^T) |^{-d}  |\Psi A_j^TS'| 
\leq C(d,m,\epsilon)    \delta^{d(m-l)}  
\end{equation}
with positive constants $c(d,m)$ and $C(d,m,\epsilon)$.
On the left hand side we used the growth in $\delta$ 
of the volume of the stick. On the right hand side 
we used that the first $l$ rows of $A$ equal those of $-I$ and thus 
$$\epsilon< |\det(A_j^T)|=|\det(\Psi A_j^T \Psi^T)|, $$
and we estimated the size of the ball with radius  $\delta$ 
in $\R^{d(m-l)}$ that contains $\Psi A_j^T S'$ by virtue of \eqref{nonconcellativect}.
Choosing $0<\delta<0.1$ small enough depending on $d,m,\epsilon$,
inequality \eqref{stickballct} is a contradiction, thereby proving the claim.

By the triangle inequality,  the $\xi$ obtained via the claim also satisfies 
\begin{equation*}%\label{coordcanc1ct}
\min(|\xi_{ik_1}|,|(A^T\xi)_{ik_2}|) \ge  4\max(1,\|A\|)\delta
\end{equation*}
for some 
$1\leq i\leq m$ and $1\leq k_1,k_2 \leq d$. 
To prove the desired lower bound \eqref{geolemmact} for every $\eta\in S$, since $1/2\le \|\eta\|,\|\xi\|  \le 1$, it suffices by scaling to show the analoguous bounds with $2\delta$ on the right-hand side under the assumption that $\|\eta\|=\|\xi\|$.
Then $|\eta-\xi|\leq \delta$ and $|A(\eta-\xi)|\leq \|A\|\delta$. Thus  
\begin{align*}
|(A^T\eta)_{ik_2}| \geq |(A^T\xi)_{ik_2}| - |(A^T(\eta-\xi))_{ik_2}| \geq 4\max(1,\|A\|)\delta - \|A\|\delta > 2\delta ,
\end{align*}
and similarly
\begin{align*}
|\eta_{ik_1}| \geq |\xi_{ik_1}| - |\eta_{ik_1}-\xi_{ik_1}| > 2\delta .
\end{align*}
This completes the proof of Lemma \ref{geoct}.
\end{proof}

We proceed to decompose $K$. Let $\delta$ be as in the above Lemma \ref{geoct}.
Consider a maximal set $\Gamma$ of $\delta/6$-separated vectors 
of unit length in $\R^{d(m-l)}$.  By volume considerations on
the unit sphere, there are at most $C(d,m) \delta^{-d(m-l)}$
elements in $\Gamma$. The balls of radius $\delta/2$ centered around 
these points cover the sphere.

For $\gamma\in \Gamma$, let $\rho_\gamma$ be a smooth nonnegative bump function in $\R^{d(m-l)}$ supported on a ball of radius $\delta$ about $\gamma$ and constant one on ball of radius $\delta/2$ about $\gamma$.
Then evidently $\sum_{\gamma \in \Gamma} \rho_\gamma$ is uniformly bounded below on the unit sphere  and we may consider the partition
of unity of $\R^{md}\setminus \{ \R^{dl} \times \{0\}\}$ by the functions
$$f_\gamma(\xi):=\frac{\rho_\gamma(\xi''/\|\xi''\|)}
{\sum_{\gamma' \in \Gamma} \rho_{\gamma'}(\xi''/\|\xi''\|)}.$$
Note the derivative bounds
$$
 |\partial^\alpha f_\gamma(\xi'')|\leq C_\alpha \|\xi''\|^{-|\alpha|}
$$
 for all $\xi''\neq 0$. We  write
\begin{align*}
\widehat{K}(\xi) =\sum_{\gamma} \widehat{K}(\xi)f_\gamma(\xi'')
= \sum_{\gamma} \widehat{K_\gamma}(\xi).
\end{align*} 
Since the number of summands $K_\gamma$ depends only on $d$ and $m$, 
we may restrict attention to an individual summand and prove
$$ |\Lambda(K_\gamma,A)|\le C. $$

 Let $\psi: \R^{dl} \rightarrow \R$ and  $\phi: \R^{d(m-l)}\rightarrow \R$ be  radial Schwartz functions supported in the annuli
$\{1/2\leq |\eta| \leq 1\}$ in $\R^{dl}$ and $\R^{d(m-l)}$, respectively. We normalize them such that   
\begin{align*}
1=\int_0^\infty \psi(t\xi') \frac{dt}{t} =   \int_0^\infty \phi(t\xi'') \frac{dt}{t} 
=\int_0^\infty \int_0^\infty \psi(s t\xi') \phi(t\xi'')\frac{ds}{s} \frac{dt}{t}
\end{align*}
  for every $\xi',\xi'' \neq 0$.  
Then for each $\xi$ with $\xi''\neq 0$ we decompose $\widehat{K}_\gamma(\xi)$
 according to the small and large values of  $s$ as
 \begin{align}\label{k2ct}
\int_0^\infty \int_1^\infty \widehat{K_\gamma}(\xi)  {\psi}(st\xi') \phi(t\xi'')  \frac{ds}{s} \frac{dt}{t} 
\end{align}
  \begin{align}\label{k1ct}
 + \int_0^\infty \int_0^1 \widehat{K_\gamma}(\xi)  {\psi}(st\xi') \phi(t\xi'')  \frac{ds}{s} \frac{dt}{t} .
  \end{align}

We estimate the effect of the multipliers \eqref{k2ct} and \eqref{k1ct}    separately. For \eqref{k2ct} we integrate in $s$ and note that
 $$\rho(\xi'):=\int_1^\infty {\psi}(s\xi')\frac{ds}{s}$$
extends to a smooth bump function with compact support in $\|\xi'\|<2$.
We then fix  $t$ and rescale the 
corresponding portion of the multiplier back as on the left-hand side of the following display \eqref{definektct}. Moreover, we define the multiplier $\widehat{K_t}$ by
\begin{align}\label{definektct}
\widehat{K_\gamma}(t^{-1}\xi)  {\rho}(\xi') \phi(\xi'') 
=: \widehat{K_t}(\xi) \widehat{g_{i,k_1,k_2}}((I \ A)^T \xi),
\end{align}
where   $g_{i,k_1,k_2}$ is defined in (2) of Lemma \ref{indstep}
for suitable $i,k_1,k_2$.
To make sure that $\widehat{K_t}$ is well defined and well behaved, we need that the second factor on the right-hand side is bounded away from $0$ on the compact support of the left-hand side.  By Lemma \ref{geoct}, there exist   $l+1\leq i\leq m$  and $1\leq k_1,k_2 \leq d$ such that for each   $\xi$ in the support of 
the left-hand side of \eqref{definektct} we have
\begin{align*}% \label{claim:localization}
|\xi_{ik_1}| >  \delta \quad \textup{and} \quad |(A^T\xi)_{ik_2}| > \delta .
\end{align*} 
Since   $\widehat{g_{i,k_1,k_2}}$ vanishes only at 
$\xi_{ik_1}=0$ and $(A^T\xi)_{ik_2}=0$, it
is bounded uniformly away from $0$ on the support of the left-hand side of \eqref{definektct}.  Therefore, the function $\widehat{K_t}$ is well defined, smooth, and satisfies some uniform bounds
$$|\partial^\alpha {\widehat{K_t}}(\xi)|\leq C  $$
uniformly in $t$ for all $|\alpha| \leq 3dm$.
We  expand it into its Fourier integral
\begin{align*}
\widehat{K_t}(\xi) = \int_{\R^{dm}} K_t(u)e^{2\pi i u \cdot \xi} du. 
\end{align*}
Integrating by parts,  using the derivative estimates  up to order $3dm$ and bounding the size of the support of $\widehat{K_t}$ by an absolute constant times $\delta^{d m-1}$,  we obtain the bound
\begin{align}\label{bd-fourier1}
|K_t(u)|  \leq C  (1+\|u\|)^{-3dm}.
\end{align}
Combining \eqref{definektct} and \eqref{bd-fourier1}, and rescaling back, we see that 
it  suffices to consider the multiplier
 \begin{align*}
\int_{\R^{dm}} (1+\|u\|)^{-3dm}
\Big ( \int_0^\infty 
\big( K_t(u) (1+\|u\|)^{3dm} \big )
\widehat{g_{i,k_1,k_2}}((I \ A)^T t \xi)
e^{2\pi i u \cdot t \xi}
 \frac{dt}{t}\Big ) du .
\end{align*}

Using (2)  of Lemma \ref{indstep} at level $l$ to estimate the singular Brascamp-Lieb integral
associated with the   multiplier in the bracket for a fixed $u$ and integrating in $u$ we obtain 
the desired bound for \eqref{k2ct}.

It remains to consider the part \eqref{k1ct}. Here we fix $0<s<1$ and consider
$$\widehat{K_s}(\xi):=\
\int_0^\infty  \widehat{K_\gamma}(\xi)  {\psi}(st\xi') \phi(t\xi'')  \frac{dt}{t}. $$
We will prove a bound on $\Lambda(K_s,A)$ that is proportional to $s$, so that
we will be able to integrate against $ds/s$ and obtain a good bound for 
the form associated with \eqref{k1ct}.

Let $D$ be the $m\times m$ diagonal  matrix with $d_{ii}=s^{}$ for $i\leq l$ and 
$d_{ii} = 1$ for $i>l$. 
By (1) of Lemma \ref{lemma:symmetries} we have 
$$\Lambda(K_s,A)=\Lambda(\widetilde{K}_s,\widetilde{A}),$$
where
$$\widetilde{K}_s(\xi)=\det(D)^{d} K_s (D\xi),\  \widetilde{A}=D^{-1}AD.$$
Recall that the first $l$ rows of  $A$    coincide with the first $l$ rows of $-I$,
hence we may view $A$ as lower triangular block matrix relative to the splitting
$$\R^{dl}\times \R^{d(m-l)}.$$ The matrix $\widetilde{A}$ arises by multiplying the non-trivial
off diagonal block by $s\le 1$. Hence  
\begin{align*}
\|\widetilde{A}\|_{HS}\leq \|A\|_{HS} \leq 1/\epsilon, \quad  |\det ((I\ \widetilde{A})\Pi_j)|=|\det ((I\ {A})\Pi_j)| > \epsilon.
\end{align*}
We thus plan to apply (2) of Lemma \ref{indstep} with the matrix $\widetilde{A}$.
We note
$$\widehat{\widetilde{K}_s}(\xi)=\
\int_0^\infty  \widehat{K_\gamma}(D^{-1} \xi)  {\psi}(t\xi') \phi(t\xi'')  \frac{dt}{t} .$$
Now we fix in addition $t$ and rescale similarly to \eqref{definektct}. We set
\begin{equation}
\label{defktsct}
\widehat{K_\gamma}(t^{-1}D^{-1}\xi) \psi(\xi')\phi(\xi'')=:\widehat{{K}_{t,s}}(\xi) 
\widehat{g_{i,k_1,k_2}} ((I\ \widetilde{A})^{T} \xi)  
\end{equation}
with some suitable   $l+1\leq i \leq m$ and $1\leq k_1,k_2 \leq d$ from Lemma \ref{geoct}.
Similarly as in the discussion of \eqref{definektct}, on the compact
support of the left-hand side, $\xi'\sim 1$ and $\xi''\sim 1$, the second factor on the right hand side is bounded below, so the function $\widehat{{K}_{t,s}}$ is well defined.
We now claim that 
$$|\partial^\alpha \widehat{K_{t,s}}(\xi)|\le s C $$
uniformly in $t$ for all multi-indices $\alpha$ up to order $3dm-1$. To see this, we
need to show the analoguous estimate for the left hand side of \eqref{defktsct}.
Applying a partial derivative on the left-hand side, we apply the Leibniz rule
and consider the terms separately.

By  $\partial_{ik}f$ we denote the $((i-1)d+k)$-th partial derivative of a function $f$ on $\R^{md}$, $1\leq i \leq m, 1\leq k\leq d$.  
If one derivative $\partial_{ik}$ with $i>l$, $1\leq k \leq d$,
falls on $\widehat{K_\gamma}(t^{-1}D^{-1}\xi)$, we estimate
$$\partial_{ik}(\widehat{K_\gamma}(t^{-1}D^{-1}\xi))\le C t^{-1}(t^{-1}s^{-1}\|\xi'\|+ t^{-1}\|\xi''\|)^{-1}\le Cs$$
since both $\xi'$ and $\xi''$ can be assumed of unit length. Similarly we estimate
if more than one derivative $\partial_{ik}$ with $i>l$
falls on $\widehat{K_\gamma}(t^{-1}D^{-1}\xi)$. If no such derivative falls on $\widehat{K_\gamma}(t^{-1}D^{-1}\xi)$, then only  partial derivatives $\partial_{ik}$
with $i\le l$ fall on $\widehat{K}(t^{-1}D^{-1}\xi)$. Restricting attention to one such derivative we use the vanishing condition \eqref{vanishing} to obtain with the fundamental theorem of calculus 
\begin{align*}\partial_{ik} (\widehat{{K_\gamma}}(t^{-1}D^{-1} \xi)) & = \partial_{ik}
	\int_0^1  \partial_h  ( \widehat{{K_\gamma}}((ts)^{-1}\xi',h t^{-1}\xi'')  ) \, dh\\
	&
	= (ts)^{-1} \int_0^1  t^{-1}\xi''\cdot (\nabla \partial_{ik} \widehat{K_\gamma}) ((ts)^{-1}\xi',h t^{-1}\xi'') dh,
	\end{align*} 
where $\nabla$ denotes the gradient in the last $d(m-l)$ components. 
The desired estimate now follows through derivative estimates for $\widehat{K_\gamma}$
	with one degree higher than $|\alpha|$, note the gain of the factor $s$ comes from the length of $\xi''$ relative to the length of $\xi$ in the relevant support.

As before, we expand  the Fourier integral
\begin{align*}
\widehat{{K}_{t,s}}(\xi) = \int_{\R^{dm}} {K}_{t,s}(u)e^{2\pi i u \cdot \xi} du
\end{align*}
and we observe the bound
\begin{align*}%\label{bd-fourier}
|{K}_{t,s}(u)|  \leq C s (1+\|u\|)^{-3dm+1}.
\end{align*}
It  suffices to consider the multiplier
\begin{align*}
\int_{\R^{dm}} (1+\|u\|)^{-3dm+1}
\Big ( \int_0^\infty 
\big( {K}_{t,s}(u) (1+\|u\|)^{3dm-1} \big )
\widehat{g_{i,k_1,k_2}}((I \ \widetilde{A})^T t \xi)
e^{2\pi i u \cdot t \xi}
\frac{dt}{t}\Big ) du.
\end{align*}
We again apply (2) of Lemma \ref{indstep}  at level $l$ and integration in $v$ and $s$ to obtain the desired bound.

\section{Proof of (2) of Lemma \ref{indstep} }

Consider $m,d,l,\epsilon$ as in Lemma \ref{indstep}. We shall 
prove existence of a constant $C$ such that (2) holds,
under the hypothesis that for the same $m,d$ but for
$l$ replaced by $l+1$ and for $\epsilon$ replaced by
possibly much smaller $\tilde{\epsilon}$ depending on $d,m,\epsilon$, 
there is a constant $C$ such that (1) holds.

Let $A$ be as in Lemma \ref{indstep}. Recall  that 
the first $l$ rows of $A$ coincide with the first $l$ rows of $-I$.
We shall assume $l<m$ because the case $l=m$ is void. 
With $i,k_1,k_2$ as in (2) of Lemma \ref{indstep}, we need to estimate
the form associated with  the multiplier
   $$\widehat{K}(\xi) = \int_0^\infty c_t(u)\, \widehat{g_{i,k_1,k_2}} ((I\ A)^T (t\xi)) e^{2\pi i u\cdot t\xi} \frac{dt}{t}.$$
Let us first compute the kernel and the form on the spatial side. We have
\begin{align*}
K((I\ A)x) & = \int_{(\R^d)^{m}} \widehat{K}(\xi)e^{2\pi i \xi\cdot ((I\ A)x)} d\xi\\
& =   \int_0^\infty c_t(u) \int_{(\R^d)^{m}} \widehat{g_{i,k_1,k_2}}((I\ A)^T(t\xi))e^{2\pi i u\cdot t\xi}    e^{2\pi i ((I\ A)^T\xi) \cdot x} d\xi \frac{dt}{t} \\
& =  \int_0^\infty c_t(u) \int_{(\R^d)^{m}}  (g_{i,k_1,k_2})_t(x+(-Ap+ut, p)) dp \frac{dt}{t} ,
\end{align*}
where we write $f_t(\cdot )=t^{-{2dm}}f(t^{-1} \cdot )$ for a function $f$ in dimension $2dm$.
The last equality is verified noting that the right-hand side is the integral of the function
$$y\mapsto (g_{i,k_1,k_2})_t(x+y+(ut,0))$$
over the subspace $\{(-A^T\ I)^Tp:p\in (\R^d)^m\}$, while the left-hand side is the integral
of the Fourier transform of this function over the orthogonal subspace 
$$\{(I\ A)^T\xi: \xi\in (\R^d)^m\}.$$
Using the definition of $g_{i,k_1,k_2}$ and Fubini, we obtain for the associated form $\Lambda(K,A)$
\begin{align}\label{lambdakact}
  \nonumber
 &\int_0^\infty c_t(u) \int_{(\R^d)^m}   \int_{(\R^{d})^{2m-2}} 
    \Big( \int_{\R^d} \Big(\prod_{j(i)=0} F_j(\Pi_j x)\Big)
 (\partial_{k_1} g)_{t}(x_i^0+(-Ap+ut)_i) 
 \, dx_i^0\Big )\\ \nonumber
 & \Big( \int_{\R^d} \Big( \prod_{j(i)=1} F_j(\Pi_j x)\Big)
 (\partial_{k_2} g)_{t}(x_i^1+p_i)
 \, dx_i^1\Big )\\   
 & g_{t}\big ((x^0-Ap+ut)_{h\neq i}, (x^1+p)_{h\neq i}\big )\,
 d ((x^0)_{h\neq i}, (x^1)_{h\neq i} )
 \, dp  \frac {dt}t  .
\end{align}

We next prove a particular case of the desired inequality. The 
particular case is defined by the assumptions $1\le i\le l+1$, $k_1=k_2=:k$,
$c_t=1$ for all $t>0$, $u=0$, and in addition to the symmetries
stated in the lemma, also  $F_{(l+1)*j}=F_j$ for all $j\in Q$,
and the $(l+1)$-st row of $A$ also coincides with the $(l+1)$-st row of $-I$. Note all assumptions are more specific than in (2) of 
Lemma \ref{indstep}, except that we on purpose allow $i\le l$ here.

We then recognize that the first bracket in the last display becomes equal to the second bracket by the conditions on  $i$, $u$, and $A$, and the reflection symmetries of the tuple $(F_j)_{j\in Q}$. The two brackets therefore form a square. As Gaussians are positive and  $c_t(u)$ is positive, the entire form becomes non-negative. This holds for all $1\le i\le l+1$ and all $k$. Therefore, instead of proving bounds for each of these terms, it suffices to estimate the sum of all these terms
over $1\le i\le l+1$ and $k$, which has better algebraic properties.

To identify the good properties of this sum, note it is   associated with the multiplier
$$\widehat{K}_\Sigma(\xi):= \sum_{i=1}^{l+1} \sum_{k=1}^{d} \int_0^\infty \widehat{g_{i,k,k} }((I\ A)^T (t\xi))  \frac{dt}{t}.$$
We will add and subtract $\pi$ from this multiplier. We will estimate by hand the form associated with $\pi$, and we will apply the induction hypothesis to  $\widehat{K}_\Sigma(\xi)-\pi$.

The form associated with $\pi$   on  the spatial side is  $\pi$ times 
$$\Lambda(\delta_0 ,A) = \int_{(\R^d)^{2m}}\Big ( \prod_{j\in Q} F_j(\Pi_j x)\Big )\delta_0((I\ A)x)dx ,$$ 
where  $\delta_0$ denotes the Dirac delta distribution. This is a standard Brascamp-Lieb integral.  Applying the arithmetic-geometric mean  inequality at every point $x$ and pulling the arithmetic mean out of the integral, we bound the last display by
\begin{align*}
2^{-m}\sum_{j\in Q} \Big ( \int_{(\R^d)^{2m}}  F_j(\Pi_j x)^{2^m} \delta_0((I\ A)x)dx \Big ).
\end{align*}
This is an average over $j\in Q$, and it suffices to prove bounds for fixed $j$ as follows
\begin{align*}
& \int_{(\R^d)^{2m}} F_j(\Pi_j(x^0,x^1)^T)^{2^m}\delta_0(x^0 + Ax^1)dx^0dx^1\\
&=\int_{(\R^d)^m}F_j(\Pi_j (-Ax^1,x^1)^T )^{2^m}dx^1 \\
&= |\det (\Pi_j (-A^T\ I)^T  )|^{-d}\|{F}_j\|^{2^m}_{2^m} \le \epsilon^{-d}\|{F}_j\|^{2^m}_{2^m}.
\end{align*}
In the last inequality we used the assumption on $A$ and that the absolute value of the determinant in this display is equal to  
$$ |\det (\Pi_\ell  (I \ A)^T )| = |\det (  (I \ A) \Pi_\ell^T )|\ge \epsilon, $$ 
where  $\ell$ is the   corner of the cube opposite to $j$, that is $j(i)+\ell(i)=1$ for all $i$.   
This completes the bound for the multiplier $\pi$.

To estimate the form associated with $\widehat{K}_\Sigma-\pi$, we apply (1) of Lemma \eqref{indstep} for $l+1$ and $\epsilon$ to a suitably normalized kernel. Most assumptions of (1) are straightforward, the main
difficulty is  the vanishing condition \eqref{vanishing}.
 Using
\begin{align*}
\widehat{g'}(-\eta) = \overline{\widehat{g'}(\eta)}, \quad \widehat{g}(0)=1
\end{align*}
for a Gaussian $g$ on $\R$ and the assumption that the first $l+1$ rows of $A$ are
equal to the first $l+1$ rows of $-I$, we obtain  
\begin{align}\label{kxizeroct}
\widehat{K}_\Sigma(\xi_1,\ldots \xi_{l+1},0,\ldots,0) = \sum_{i=1}^{l+1} \sum_{k=1}^{d}  \int_0^\infty |\widehat{\partial_{k} g}(t\xi_{i})|^2 \Big ( \prod_{j=1, j\neq i}^{l+1}   |\widehat{g}(t\xi_{j})|^2 \Big )  \frac{dt}{t}.
\end{align}
Observe the elementary identity
\begin{align}
\label{heat}
-t\partial_t |\widehat{g}(t\eta)|^2 = \frac{1}{\pi} |\widehat{g'}(t\eta)|^2
\end{align}
valid for a one-dimensional Gaussian. 
Since $\widehat{g}(t\xi_i)$ is a product of one-dimensional Gaussians $\widehat{g}(t\xi_{i1})\cdots \widehat{g}(t\xi_{id})$,   together with the Lebniz rule the identity \eqref{heat}  implies   
\begin{align}
\label{heat2}
-t \partial_t|\widehat{g}(t\xi_i)|^2 = \frac{1}{\pi}\sum_{k=1}^d |\widehat{\partial_k g} (t\xi_i)|^2 .
\end{align}
By   \eqref{heat2}, the fundamental theorem of calculus and another application of 
 the Leibniz rule, we equate \eqref{kxizeroct} with
\begin{align*}%\label{Fundamental}
 -\pi  \int_0^\infty t\partial_t \Big ( \prod_{j=1}^{l+1}  |\widehat{g}(t\xi_j)|^2\Big ) \frac{dt}{t} = \pi.
\end{align*}
 This completes verification of \eqref{vanishing} for $\widehat{K}_\Sigma-\pi$ and establishes the desired estimate for the associated form.
 
  To round up the discussion, we present a derivation of the elementary identity \eqref{heat} from the heat equation
$$ \partial_t g_t (s)=  \frac {t}{2\pi } \partial_s^2 g_t(s)$$
and the convolution identity
$$g_{\sqrt{2}t}(s_1-s_0)=\int_{\R} g_{t}(s_1-p) g_{t}(s_0-p)\, dp.$$
Indeed, integrating by parts in $p$ we obtain
\begin{align*}
& \partial_t g_{\sqrt{2}t}(s_1-s_0)\\ 
& =\frac t{2\pi }\int_{\R}  \partial_p^2 g_{t}(s_1-p) g_{t}(s_0-p)\, dp+\frac t{2\pi }\int_{\R}    g_{t}(s_1-p) \partial_p^2 g_{t}(s_0-p)\, dp\\
& = -\frac t{\pi }\int_{\R}  \partial_p  g_{t}(s_1-p) \partial_p g_{t}(s_0-p)\, dp .
\end{align*}
This can be turned into \eqref{heat} by taking the Fourier transform. 

We have completed the estimate of the form associated with \eqref{lambdakact}
in the particular case. It remains to reduce the general case to the particular case. We will reduce to the particular case with $A$ replaced
by different matrices, which may satisfy \eqref{aepsilonct} with different $\tilde{\epsilon}$. These different $\tilde{\epsilon}$
however only depend on $m,d,\epsilon$.

We shall first reduce the general case to the case $i\le l+1$. This is done by a permutation of the coordinates if needed. If $i>l+1$, let $P$ be the involution that switches $i$ and $l+1$.
Applying (2) of Lemma  \ref{lemma:symmetries} reduces the to new data which still satisfy
our assumptions of $(2)$ of Lemma \ref{indstep}. Henceforth we assume $i\le l+1$.

Next, we symmetrize the tuple $F_j$ and the pair $k_1,k_2$. We pull $c_t(u)$ into one of the brackets,  apply Cauchy-Schwarz, and then estimate $c_t(u)$ by a constant. This bounds \eqref{lambdakact} by the geometric mean of 
\begin{align}\nonumber
& \int_0^\infty    \int_{(\R^d)^m}   \int_{(\R^d)^{2m-2}} 
 \Big (\int_{\R^d} \Big(\prod_{j(i)=0} F_j(\Pi_j x)\Big)
(\partial_{k_1}  g)_{t}(x_i^0+(-Ap+ut)_i) 
\, dx_i^0\Big)^2\\
& g_{t}\big ((x^0-Ap+ut)_{h\neq i}, (x^1+p)_{h\neq i}\big)\,
d ((x^0)_{h\neq i}, (x^1)_{h\neq i})
\, dp  \frac {dt}t \label{cs1ct} 
\end{align}
and
\begin{align} \nonumber 
&\int_0^\infty    \int_{(\R^d)^m}   \int_{(\R^d)^{2m-2}} 
 \Big(\int_{\R^d} \Big( \prod_{j(i)=1} F_j(\Pi_j x)\Big)
(\partial_{k_2}  g)_{t}(x_i^1+p_i) 
\, dx_i^1\Big)^2\\
& g_{t}\big((x^0-Ap+ut)_{h\neq i}, (x^1+p)_{h\neq i}\big )\, 
d ((x^0)_{h\neq i}, (x^1)_{h\neq i})
\, dp  \frac {dt}t \label{cs2ct}.
\end{align}

It suffices to bound both terms separately and we begin with
\eqref{cs2ct}. To get rid of $u$, we dominate a non-centered Gaussian by a centered Gaussian  as in
\begin{align*}%\label{domination}
g (s+v) \leq 10 g \Big (\frac {s}{2+2\|v\|} \Big ).
\end{align*}
Let $v$ be the vector $u$ with the $i$-th $d$-dimensional component replaced by $0$.
Let $D$ the $m\times m$ diagonal matrix with $d_{hh}=2(1+\|v\|)$ for $h\neq i$, and  $d_{ii} =1$. Using the above
domination we estimate \eqref{cs2ct} by
\begin{align*} 
& \int_0^\infty \int_{(\R^d)^m}   \int_{(\R^{d})^{2m-2}} 
   \Big ( \int_{\R^d} \Big(\prod_{j(i)=0} F_j(\Pi_j x)\Big)
(\partial_{k_2} g)_{t}((D^{-1}x^1)_i+p_i) 
\, dx_i^0\Big )^2\\ \nonumber
& g_{t}\big ((D^{-1}x^0-D^{-1}A p)_{h\neq i}, (D^{-1}x^1+D^{-1}p)_{h\neq i}\big)\,
d ((x^0)_{h\neq i}, (x^1)_{h\neq i} )
\, dp  \frac {dt}t.
\end{align*}
Replacing variables $p$ by $Dp$, $x^0$ by $Dx^0$, $x^1$ by $Dx^1$ and 
using $\widetilde{F}_j$ as in (1) of Lemma~\ref{lemma:symmetries}
turns this into
\begin{align*}
 \det(D)^{2d} & \int_0^\infty \int_{(\R^d)^m}   \int_{(\R^{d})^{2m-2}} 
\Big ( \int_{\R^d} \Big(\prod_{j(i)=0} \widetilde {F}_j(\Pi_j  x)\Big)
(\partial_{k_2} g)_{t}(x^1_i+p_i) 
\, dx_i^0\Big )^2 \\ \nonumber
& g_{t}\big ((x^0-D^{-1}ADp)_{h\neq i}, (x^1+p)_{h\neq i}\big )\,
d ((x^0)_{h\neq i}, (x^1)_{h\neq i} )
\, dp  \frac {dt}t .
\end{align*}

To obtain the desired bound, it suffices to apply the particular case of 
\eqref{lambdakact} with the matrix $D^{-1}\widetilde{A} D$ in place of $A$,
where $\widetilde{A}$ is the matrix whose $i$-th row is that of $-I$ and
whose other rows equal those of $A$.
In particular, the first $l+1$ rows of the   matrix ${D}^{-1}\widetilde{A}{D}$ 
coincide with the first $l+1$ rows of $-I$, and we have
\begin{align*}
\|{D}^{-1}\widetilde{A}{D}\|_{HS}\le \|\widetilde{A}\|_{HS} \leq \|A\|_{HS}+1 \le \epsilon^{-1}+1,
\end{align*}
\begin{align*}
    |\det ((I\ {D}^{-1}\widetilde{A}{D})\Pi_j^T) |  \ge \inf_{\tilde{j}}| \det((I\ {D}^{-1}{A}{D})\Pi_{\tilde{j}}^T) | >\epsilon. 
\end{align*}  
Note that we have the upper bound
$$\det(D)^{2d}\le (1+\|u\|)^{2d(m-1)},$$
which is the additional factor in (2) of Lemma \ref{indstep}.
This concludes the estimate of the term \eqref{cs2ct}.

It remains to estimate the term  \eqref{cs1ct}. We reduce it to
the previous case \eqref{cs2ct} by a $t$-dependent affine linear change of variables  
$$\widetilde{p}= -A p + ut . $$
This reduces  \eqref{cs1ct} to 
\begin{align*}\nonumber
|\det(A)|^{-d} & \int_0^\infty    \int_{(\R^d)^m}   \int_{(\R^d)^{2m-2}} 
\Big (\int_{\R^d} \Big(\prod_{j(i)=0} F_j(\Pi_j x)\Big)
(\partial_{k_1}  g)_{t}(x_i^0+\widetilde{p}_i )
\, dx_i^0\Big)^2\\
& g_{t}\big ((x^0+\widetilde{p})_{h\neq i}, (x^1-A^{-1}\widetilde{p}+A^{-1}ut)_{h\neq i}\big)\,
d ((x^0)_{h\neq i}, (x^1)_{h\neq i} )
\, d\tilde{p}  \frac {dt}t . %\label{csct} 
\end{align*}
Interchanging the roles of $0$ and $1$ in the range of $j$
reduces this to the previous case with an additional factor
$|\det(A)|^{-d}$, $A$ replaced by $A^{-1}$ and with $u$ replaced by $A^{-1}u$.
As the first $l+1$ rows of $A^{-1}$ coincide with the first $l+1$ rows of  $-I$, it remains to show the conditions \eqref{aepsilonct} for $A^{-1}$
for some $\tilde{\epsilon}$ depending on $\epsilon, m, d$.

The entries of $A^{-1}$ can be estimated by Cramer's rule
by
$$\|A\|_{HS}^{m-1}\det(A)^{-1}\le \epsilon^{-m}$$
and hence
$$\|A^{-1}\|_{HS}\le m\epsilon^{-m}.$$
Estimating the determinants of 
$(I \ A^{-1})\Pi_j^T$ in absolute value from below is tantamount
to estimating determinants of submatrices of $A^{-1}$ obtained  
by deleting any number of pairs of matching rows and columns. 
Considering  block decompositions with squares on the diagonal
  $$\ A = \left ( \begin{array}{cc}
  A_{11} & A_{12}\\
  A_{21}& A_{22}
  \end{array} \right ),\quad  A^{-1} = \left ( \begin{array}{cc}
X_{11} & X_{12}\\
X_{21}& X_{22}
\end{array} \right ), 
$$
we will show a lower bound on $\det (X_{11})$. The general case, when we delete arbitrary rows and columns of $A^{-1}$ can be deduced similarly after permuting rows and columns. 

Note that $A_{22}$ is invertible, since \eqref{aepsilonct} gives  a lower bound on its determinant when choosing suitable $\Pi_j$.
We successively compute
\begin{align*}
& A_{21}X_{11} +A_{22}X_{21}=0,\\
& A_{12}A_{22}^{-1}A_{21}X_{11} +A_{12}X_{21}=0,\\
& A_{12}A_{22}^{-1}A_{21}X_{11} -A_{11}X_{11}=-I.
\end{align*}
A lower bound on $\det(X_{11})$ follows from an upper bound on the determinant of
$$A_{12}A_{22}^{-1}A_{21} -A_{11}.$$
Such bound follows from an upper bound on the norm of this matrix.
Upper bounds on the norms of $A_{12}$, $A_{22}$, $A_{11}$ are obtained
using the bound on the Hilbert Schmidt norm of $A$, while the bound on the norm
of $A_{22}^{-1}$ uses Cramer's rule as above and the lower bound on the determinant of $A_{22}$. Note finally that  
$$(1+\|A^{-1}u\|)^{2d(m-1)}\le (m^d\epsilon^{-md})^{2dm}(1+\|u\|)^{2d(m-1)},$$
which is up to a constant dominated by the factor in (2) of Lemma \ref{indstep}.

\section*{Acknowledgments}
The authors thank Vjekoslav Kova\v{c} and Kristina Ana \v{S}kreb for inspiring discussions
aided by the bilateral DAAD-MZO
grant {\em Multilinear singular integrals and applications}.
The second author acknowledges support
by the Hausdorff Center for Mathematics and the Deutsche Forschungsgemeinschaft through the Collaborative Research Center 1060. 
The authors thank the anonymous referee for a number of thoughtful comments.

\end{document}